\theoremstyle{plain}
\newtheorem{theorem}{Theorem}
\newtheorem{lemma}[theorem]{Lemma}
\newtheorem{corollary}[theorem]{Corollary}
\theoremstyle{definition}
\theoremstyle{remark}
\title{\bf Complementary Families of the Fibonacci-Lucas Relations}
\author{Ivica Martinjak\\
Faculty of Science, University of Zagreb\\
Bijeni\v cka cesta 32, HR-10000 Zagreb, Croatia\\
\and
Helmut Prodinger\\
Department of Mathematics, University of Stellenbosch\\
7602 Stellenbosch, South Africa\\
}
\begin{document}

\maketitle

\begin{abstract}
In this paper we present two families of Fibonacci-Lucas identities, with the Sury's identity being the best known representative of one of the families. While these results can be proved by means of the basic identity relating Fibonacci and Lucas sequences we also provide a bijective proof. Both families are then treated by generating functions.
\end{abstract}

\noindent {\bf Keywords:} Fibonacci sequence, Sury identity, tilings\\
\noindent {\bf Mathematics Subject Classifications:}11B39, 11B37

\section{Introduction} 

\noindent
The Fibonacci and Lucas sequences of numbers are defined as 
\begin{align}  \label{FiboRecurr}
F_{n+2}=F_{n+1}+F_{n}, \enspace F_0=0, \enspace F_1=1,\\
L_{n+2}=L_{n+1}+L_{n}, \enspace L_0=2, \enspace L_1=1,
\end{align}
where $n \in \mathbb{N}_0$, and denoted by $(F_n)_{n \ge 0}$ and $(L_n)_{n \ge 0}$, respectively. Equivalently, these sequences could be defined as the only solutions $(x,y)$, $x=L_n$, $y=F_n$ of the Diophantine equation $$x^2-5y^2=4(-1)^n.$$ 

For natural numbers $k,n$ we consider the number of ways to tile a $k \times n$ rectangle with certain tiles.  The area of  a  $1 \times 1$ rectangle is usually called a {\it cell}. Tiles that are $1 \times 1$ rectangles are called {\it squares} and tiles of dimension $1 \times 2$ are called {\it dominoes}. In particular, when $k=1$ a rectangle to tile is called $n$-{\it board}. 

Among many interpretations of the Fibonacci and Lucas numbers, here we use the fact that the number $f_n$ of an $n$-board tilings with squares and dominoes is equal to $F_{n+1}$ while the number $l_n$ of the same type of tilings of a circular $n$-board is equal to $L_n$. Namely, an $n$-board tiling begins either with a square or with a domino, meaning that the numbers $f_n$ obey the same recurrence relation as the Fibonacci numbers. Having in mind that the sequence $(f_n)_{n \ge 0}$ begins with $f_1=1$, $f_2=2$ we get 
\begin{align} \label{Fnfn}
f_n=F_{n+1}. 
\end{align}
Similar reasoning proves that the number of tilings of a circular board of the length $n$ is equal to the $n$-th Lucas number,
\begin{align} \label{Lnln}
l_n=L_n.
\end{align}

A circular board of the length $n$ we shall call {\it n-bracelet}. We say that a bracelet is {\it out of phase} when a single domino covers cells $n$ and 1, and {\it in phase} otherwise. Consequently there are more ways to tile an $n$-bracelet then an $n$-board. More precisely, $n$-bracelet tilings are equinumerous to the sum of tilings of an $n$-board and $(n-2)$-board, 
\begin{align} \label{FiboLucasElemId1}
l_n=f_n+f_{n-2}. 
\end{align}
This follows from the fact that in phase tilings can be unfolded into $n$-board tilings while out of phase tilings can be straightened into $(n-2)$-board tilings since a single domino is fixed on cells $n$ and 1. Once having (\ref{Fnfn}) and (\ref{Lnln}), the statement of Lemma \ref{FiboLucasElemId} follows immediately from relation (\ref{FiboLucasElemId1}).

\begin{lemma}\label{FiboLucasElemId}
The $n$-th Lucas number is equal to the sum of $(n-1)$-th and $(n+1)$-th Fibonacci number,
\begin{align}
L_n=F_{n-1} + F_{n+1}.
\end{align}
\end{lemma}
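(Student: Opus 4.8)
The plan is to read the identity off directly from the tiling interpretations already set up. We have recorded $f_n=F_{n+1}$ in (\ref{Fnfn}), $l_n=L_n$ in (\ref{Lnln}), and the bracelet decomposition $l_n=f_n+f_{n-2}$ in (\ref{FiboLucasElemId1}). The only remaining work is a substitution: write $L_n=l_n$, replace $l_n$ by $f_n+f_{n-2}$, and then replace $f_n$ by $F_{n+1}$ and $f_{n-2}$ by $F_{n-1}$. This yields $L_n=F_{n+1}+F_{n-1}$, which is the claimed identity up to reordering the two summands. In this sense there is essentially no obstacle left: the content of the lemma has been pushed into the combinatorial statement (\ref{FiboLucasElemId1}), whose justification (in-phase bracelet tilings unfold to $n$-board tilings, out-of-phase ones straighten to $(n-2)$-board tilings since the wrap-around domino is forced) was already given in the text.

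The one point I would be careful about is the range of $n$. The tiling argument is cleanest for $n\ge 2$, where an $n$-bracelet and the notions of being ``in phase'' and ``out of phase'' make literal sense and where $F_{n-1}$ is a genuine term of the sequence. For $n=1$ the identity $L_1=F_0+F_2$ reads $1=0+1$, which I would simply check by hand (or accommodate by the convention $F_{-1}=1$, extending the identity further). So I would state the lemma for $n\ge 1$ and dispose of the boundary value in a single line.

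As an alternative that does not invoke the tiling model, one could induct on $n$: the cases $n=1,2$ are immediate from the definitions, and for the step one uses the Lucas recurrence with the inductive hypothesis, $L_n=L_{n-1}+L_{n-2}=(F_{n-2}+F_n)+(F_{n-3}+F_{n-1})$, then regroups via the Fibonacci recurrence into $(F_n+F_{n-1})+(F_{n-2}+F_{n-3})=F_{n+1}+F_{n-1}$. A third route is Binet's formula, where the elementary facts $1+\phi^2=\sqrt5\,\phi$ and $1+\psi^2=-\sqrt5\,\psi$ collapse $F_{n-1}+F_{n+1}$ to $\phi^n+\psi^n=L_n$. Since the paper's declared aim is to keep the development combinatorial, I would present the substitution argument above and relegate these to remarks.
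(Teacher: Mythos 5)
Your proposal is correct and coincides with the paper's own argument: the text derives Lemma \ref{FiboLucasElemId} by exactly the substitution you describe, combining (\ref{Fnfn}), (\ref{Lnln}) and (\ref{FiboLucasElemId1}). Your extra care about the boundary case $n=1$ and the alternative inductive and Binet-formula routes are sensible remarks but go beyond what the paper records.
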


Lemma \ref{FiboLucasElemId} presents the most elementary identity involving both the Fibonacci and Lucas sequence. In what follows we use it in order to prove further identities. It is worth mentioning that there are numerous identities known for these sequences. Many of them one can find in the classic reference \cite{vajda}. An introduction to Fibonacci polynomials can be seen in \cite{GKP} while recent results on this subject one can find in \cite{ACMS}.

\section{Colored tilings and the product $m^nF_{n+1}$} 

A tiling either of a board or bracelet of length $n$ with squares in $m$ colors and dominoes in $m^2$ colors we shall call $(n,m)$-tiling. We let $c_1,c_2,\ldots, c_m$ denote colors of squares and we let $c'_1,c'_2,\ldots, c'_{m^2} $ denote colors of dominoes within an $(n,m)$-tiling. In particular, when $m=2$ we choose white and black as colors $c_1$ and $c_2$, respectively. In case $m=3$ we choose white, gray and black for $c_1, c_2, c_3$, respectively. 

One can easily check that there are 8 tilings of a 2-board with squares in two colors and dominoes in four colors (Figure \ref{Fig1}). Furthermore, there are 24 such tilings of a 3-board, 80 tilings of a 4-boards, \ldots In general, there are $2^nf_n$ such tilings of an $n$-board. The numbers $1, 3,18,81,\ldots,3^nf_n,\ldots$ also have a combinatorial interpretation. They count the ways to tile an $n$-board with squares in 3 colors and dominoes in 9 colors. We generalize these facts in the following

\begin{lemma}
The product $m^n f_n$ represents the number of colored $n$-board tilings with squares in $m$ colors and dominoes in $m^2$ colors.   
\end{lemma}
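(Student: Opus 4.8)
The plan is to count colored $n$-board tilings directly by conditioning on the underlying uncolored tiling. First I would recall from (\ref{Fnfn}) that there are exactly $f_n$ tilings of an $n$-board by squares and dominoes. Fix one such tiling $T$ and let $s$ be its number of squares and $d$ its number of dominoes; since a square covers one cell and a domino covers two, these satisfy the cell-counting identity $s + 2d = n$.

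Next I would count the colorings of the fixed tiling $T$. Each of the $s$ squares may independently be assigned one of the colors $c_1,\dots,c_m$ and each of the $d$ dominoes one of the colors $c'_1,\dots,c'_{m^2}$, so $T$ gives rise to exactly $m^s\cdot (m^2)^d = m^{s+2d} = m^n$ distinct $(n,m)$-tilings. The essential point is that this number $m^n$ does not depend on $T$: the exponent collapses to $n$ precisely because of the identity $s+2d=n$. Finally, since forgetting the colors is a well-defined map from $(n,m)$-tilings of an $n$-board to uncolored tilings whose fiber over each $T$ has size $m^n$, summing over the $f_n$ choices of $T$ yields $\sum_T m^n = m^n f_n$ colored tilings in total, which is the claim.

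As an alternative I would give a recurrence argument: letting $g_n$ denote the number of $(n,m)$-tilings of an $n$-board, a split on the first tile — a square in one of $m$ colors followed by an $(n,m)$-tiling of an $(n-1)$-board, or a domino in one of $m^2$ colors followed by an $(n,m)$-tiling of an $(n-2)$-board — gives $g_n = m\,g_{n-1} + m^2\,g_{n-2}$ with $g_0 = 1$ and $g_1 = m$. One then checks by induction that $m^n f_n$ satisfies the same recurrence and initial values, using $f_n = f_{n-1}+f_{n-2}$ together with $f_0 = 1$, $f_1 = 1$.

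I do not anticipate a genuine obstacle here; the only points requiring care are making the cell-counting identity $s+2d=n$ explicit and observing that the colour-forgetting map partitions all colored tilings into fibers of the common size $m^n$, so that the count is simply $m^n$ times the uncolored count.
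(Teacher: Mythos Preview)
Your proposal is correct and follows essentially the same approach as the paper: both arguments fix an underlying uncolored tiling and observe that each one gives rise to exactly $m^n$ colorings, the paper phrasing this as ``tilings of two neighbouring cells with squares in $m$ colors are equinumerous to those with dominoes in $m^2$ colors,'' while you make the same point explicit via the identity $s+2d=n$. Your alternative recurrence argument is a genuine bonus not present in the paper, but the primary argument is the same idea written out more carefully.
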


\begin{proof} There are $f_n$ uncolored tilings of an $n$-board with squares and dominoes. Tilings of two neighbouring cells with squares in $m$ colors are equinumerous to those with dominoes in $m^2$ colors. Thus, each of $f_n$ tilings gives $m^n$ tilings when we use $m$ colors for squares and $m^2$ colors for dominoes.
\end{proof}

The same argument proves that $m^nL_n$ represents the number of colored $n$-bracelet tilings with squares in $m$ colors and dominoes in $m^2$ colors. 

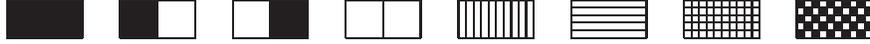
\begin{figure}[h!]
				
\setlength{\unitlength}{0.5cm}

\begin{picture}(0,1)(-2.5,0)
\put(0,0){\line(1,0){2}}
\put(2,0){\line(0,1){1}}
\put(0,0){\line(0,1){1}}
\put(0,1){\line(1,0){2}}
\multiput(0,0)(0.01,0){199} {\line(0,1){1}}

\put(3,0){\line(1,0){2}}
\put(5,0){\line(0,1){1}}
\put(3,0){\line(0,1){1}}
\put(3,1){\line(1,0){2}}
\multiput(3,0)(0.01,0){100} {\line(0,1){1}}

\put(6,0){\line(1,0){2}}
\put(8,0){\line(0,1){1}}
\put(6,0){\line(0,1){1}}
\put(6,1){\line(1,0){2}}
\multiput(7,0)(0.01,0){99} {\line(0,1){1}}

\put(9,0){\line(1,0){2}}
\put(11,0){\line(0,1){1}}
\put(9,0){\line(0,1){1}}
\put(9,1){\line(1,0){2}}
\put(10,0){\line(0,1){1}}

\put(12,0){\line(1,0){2}}
\put(14,0){\line(0,1){1}}
\put(12,0){\line(0,1){1}}
\put(12,1){\line(1,0){2}}
\multiput(12,0)(0.2,0){10}{\line(0,1){1}}

\put(15,0){\line(1,0){2}}
\put(17,0){\line(0,1){1}}
\put(15,0){\line(0,1){1}}
\put(15,1){\line(1,0){2}}
\multiput(15,0)(0,0.2){5}{\line(1,0){2}}

\put(18,0){\line(1,0){2}}
\put(20,0){\line(0,1){1}}
\put(18,0){\line(0,1){1}}
\put(18,1){\line(1,0){2}}
\multiput(18,0)(0.2,0){10}{\line(0,1){1}}
\multiput(18,0)(0,0.2){5}{\line(1,0){2}}

\put(21,0){\line(1,0){2}}
\put(23,0){\line(0,1){1}}
\put(21,0){\line(0,1){1}}
\put(21,1){\line(1,0){2}}
\multiput(21,0)(0.2,0){10}{\line(0,1){1}}
\multiput(21,0)(0,0.2){5}{\line(1,0){2}}

\multiput(21,0)(0.01,0){20}{\line(0,1){0.2}}
\multiput(21.4,0)(0.01,0){20}{\line(0,1){0.2}}
\multiput(21.8,0)(0.01,0){20}{\line(0,1){0.2}}
\multiput(22.2,0)(0.01,0){20}{\line(0,1){0.2}}
\multiput(22.6,0)(0.01,0){20}{\line(0,1){0.2}}

\multiput(21.2,0.2)(0.01,0){20}{\line(0,1){0.2}}
\multiput(21.6,0.2)(0.01,0){20}{\line(0,1){0.2}}
\multiput(22,0.2)(0.01,0){20}{\line(0,1){0.2}}
\multiput(22.4,0.2)(0.01,0){20}{\line(0,1){0.2}}
\multiput(22.8,0.2)(0.01,0){20}{\line(0,1){0.2}}

\multiput(21,0.4)(0.01,0){20}{\line(0,1){0.2}}
\multiput(21.4,0.4)(0.01,0){20}{\line(0,1){0.2}}
\multiput(21.8,0.4)(0.01,0){20}{\line(0,1){0.2}}
\multiput(22.2,0.4)(0.01,0){20}{\line(0,1){0.2}}
\multiput(22.6,0.4)(0.01,0){20}{\line(0,1){0.2}}

\multiput(21.2,0.6)(0.01,0){20}{\line(0,1){0.2}}
\multiput(21.6,0.6)(0.01,0){20}{\line(0,1){0.2}}
\multiput(22,0.6)(0.01,0){20}{\line(0,1){0.2}}
\multiput(22.4,0.6)(0.01,0){20}{\line(0,1){0.2}}
\multiput(22.8,0.6)(0.01,0){20}{\line(0,1){0.2}}

\multiput(21,0.8)(0.01,0){20}{\line(0,1){0.2}}
\multiput(21.4,0.8)(0.01,0){20}{\line(0,1){0.2}}
\multiput(21.8,0.8)(0.01,0){20}{\line(0,1){0.2}}
\multiput(22.2,0.8)(0.01,0){20}{\line(0,1){0.2}}
\multiput(22.6,0.8)(0.01,0){20}{\line(0,1){0.2}}

\end{picture}

\caption{The eight $(2,2)$-tilings of a board of length 2. In general there are $m^nF_{n+1}$ tilings with squares in $m$ colors and dominoes in $m^2$ colors of a board of length $n$. \label{Fig1}}
\end{figure}

There are a few ways to prove Fibonacci-Lucas identity $(\ref{suryId})$. It can be proved using Binet's formula \cite{sury}, and by means of generating function as well \cite{kwong}. Here we continue to call it Sury's identity, as it is already done in \cite{kwong, mart}.

\begin{lemma} \label{suryThm}
For the Fibonacci sequence $(F_n)_{n \ge 0}$ and the Lucas sequence $(L_n)_{n \ge 0}$
\begin{align} \label{suryId}
\sum_{k=0}^n 2^k L_k  = 2^{n+1 } F_{n+1}.
\end{align}
\end{lemma}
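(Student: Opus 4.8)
The plan is to turn Sury's identity into a telescoping sum, using nothing beyond the basic relation of Lemma \ref{FiboLucasElemId} and the defining recurrences (\ref{FiboRecurr}). The key observation is that the summand on the left of (\ref{suryId}) can be written as a difference of consecutive terms of the auxiliary sequence $\bigl(2^k F_k\bigr)_{k\ge 0}$. Indeed, for every $k\ge 0$ one has
\[
2F_{k+1}-F_k \;=\; L_k,
\]
because for $k\ge 1$ the left side equals $(F_k+F_{k-1})+F_{k-1}=F_{k+1}+F_{k-1}$, which is $L_k$ by Lemma \ref{FiboLucasElemId}, while for $k=0$ it reads $2F_1-F_0=2=L_0$. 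Multiplying through by $2^k$ yields the crucial identity $2^k L_k = 2^{k+1}F_{k+1}-2^k F_k$.

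The second step is immediate: summing this identity over $k=0,1,\dots,n$, the right-hand side telescopes, so that
\[
\sum_{k=0}^n 2^k L_k \;=\; 2^{n+1}F_{n+1}-2^0 F_0 \;=\; 2^{n+1}F_{n+1},
\]
the last equality because $F_0=0$. This is precisely (\ref{suryId}). The same proof can be repackaged as an induction on $n$ if one wishes: the base case $n=0$ is $L_0=2=2F_1$, and the step reduces to verifying $F_{n+1}+L_{n+1}=2F_{n+2}$, which is again Lemma \ref{FiboLucasElemId} together with (\ref{FiboRecurr}).

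I do not anticipate a genuine obstacle: once Lemma \ref{FiboLucasElemId} is in hand, the entire content of the proof is the single line that $2^k L_k$ telescopes against $2^k F_k$. If instead one wants a bijective argument in the spirit of Section 2, the idea would be to read both sides through the colored-tiling models: $2^{n+1}F_{n+1}$ is twice the number of $(n,2)$-tilings of an $n$-board, while each summand $2^k L_k$ counts the $(k,2)$-tilings of a $k$-bracelet, and one would build a map that sorts the colored board tilings according to the length $k$ of a suitable bracelet extracted from them. There the delicate points would be the bookkeeping of the extra factor of $2$ and the careful treatment of the out-of-phase bracelets and of the degenerate term $k=0$, rather than anything deep.
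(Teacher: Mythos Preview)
Your argument is correct. The identity $L_k = 2F_{k+1}-F_k$ is an immediate consequence of Lemma~\ref{FiboLucasElemId} together with the recurrence, and once it is multiplied by $2^k$ the sum telescopes exactly as you say; the boundary term vanishes because $F_0=0$.

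The paper's proof uses the same two ingredients (Lemma~\ref{FiboLucasElemId} and the defining recurrence) but organizes them differently: it substitutes $L_k=F_{k-1}+F_{k+1}$ directly, splits the resulting expression into two Fibonacci sums, and then repeatedly combines adjacent terms until a single $2^{n+1}F_{n+1}$ remains. Your rewriting $L_k=2F_{k+1}-F_k$ is a tidier decomposition that makes the telescoping visible in one line, so the whole computation collapses to a single difference; the paper's version is effectively the same cancellation carried out by hand over several displayed lines. What your packaging buys is brevity and transparency; what the paper's longer computation buys is that it stays with the ``standard'' form $L_k=F_{k-1}+F_{k+1}$ of Lemma~\ref{FiboLucasElemId} without first recasting it. The bijective sketch you add at the end matches the paper's second proof (the $1$-to-$2$ correspondence between colored bracelets and colored boards) essentially verbatim.
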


\begin{proof} We use Lemma \ref{FiboLucasElemId} and recurrence relation (\ref{FiboRecurr}) to get
\begin{align*}
 L_0&+2L_1+4L_2+\cdots + 2^n L_n\\
&=2F_1+2(F_0+F_2) +4 (F_1+F_3)+ \cdots + 2^n (F_{n-1} +  F_{n+1})\\
&= (2F_1+2F_2+4F_3+ \cdots + 2^nF_{n+1}) + (2F_0 + 4F_1+ 8F_2 + \cdots + 2^nF_{n-1})\\
&= (2F_1 + 2F_0) + (2F_2 + 4F_3+\cdots +2^nF_{n+1}) + (4F_1+ 8F_2+ \cdots + 2^nF_{n-1})\\
&= (2F_2+2F_2) + (4F_3+ \cdots +2^nF_{n+1}) + (4F_1+8F_2+\cdots+2^nF_{n-1})\\
&= (4F_2+4F_1) + (4F_3+ \cdots + 2^nF_{n+1}) + (8F_2+ \cdots + 2^nF_{n-1})\\
&= \cdots\\
&= (2^{n-1}F_n + 2^{n-1} F_{n-1}) + 2^{n-1} F_{n+1}\\
&=2^{n-1}F_{n+1} + 2^{n-1} F_{n+1}\\
&= 2^n F_{n+1}.
\end{align*}
\end{proof}

We let ${\cal A}_{n,m}$ denote the set of all $(n,m)$-board tilings. Furthermore, we let ${\cal A}_{n,m}^{s}$ denote the set of those tilings having a non-white square on $k$-th cell while the cells $k+1$ through $n$ are covered by white squares, $1 \leq k \leq n$. Similary, we let ${\cal A}_{n,m}^{d}$ denote the set of those tilings having a domino on positions $k-1$ and $k$ while the cells $k+1$ through $n$ are covered by white squares. Thus, 
\begin{align*}
{\cal A}_{n,m} = {\cal A}_{n,m}^{s} \cup {\cal A}_{n,m}^{d} \cup \{t_{n,w} \}
\end{align*}
where $t_{n,w}$ is the unique all-white squares tiling of a board of length $n$. We let ${\cal B}_{n,m}$ denote the set of all $(n,m)$-bracelet tilings.  Now we have
\begin{align*}
 {\cal B}_{n,m} =  \bigcup_{i=1}^m {\cal B}_{n,m}^{c_i} \cup  {\cal B}_{n,m}^{p} \cup  {\cal B}_{n,m}^{o}
\end{align*}
where ${\cal B}_{n,m}^{c_i}$ is the set of $(n,m)$-bracelet tilings ending with a square of color $c_i$, the ${\cal B}_{n,m}^{p}$ contains in phase tilings ending with a domino and ${\cal B}_{n,m}^{o}$ contains out of phase tilings. It is worth to note that equalities
\begin{align*}
&|{\cal B}_{n,m}^{c_1}| =|{\cal B}_{n,m}^{c_1}|  = \cdots=  |{\cal B}_{n,m}^{c_m}| \\
&|{\cal B}_{n,m}^{p}| = |{\cal B}_{n,m}^{o}|
\end{align*}
obviously holds true.

There is also an elegant bijective proof of Lemma \ref{suryThm} \cite{benjamin}. It is based on a 1 to 2 correspondence between the set ${\cal B}_{k,2}$ of bracelet tilings and the set ${\cal A}_{k,2}$ of board tilings, $1 \leq k \leq n$. Thus, in both sets squares are colored in two colors (white and black) and dominoes are colored in four colors. 

More precisely, we establish a 1 to 1 correspondence between the sets ${\cal A}_{k,2}^{c_2}$ and ${\cal B}_{k,2}^{c_1}$ as well as between the sets ${\cal A}_{k,2}^{d}$ and ${\cal B}_{n,2}^{p}$, $1 \leq k \leq n$, by the operations {\it i)} removing tiles from $k+1$ through $n$ and {\it ii)} gluing cells $k$ and 1 together.

In the same manner we generate bracelets ending with a black square and out of phase bracelets. Two remaining tilings $t_{n,w}$ consisting from all-white squares are mapped to two $0$-bracelets. Thus, twice the number of $(n,2)$-board tilings is needed to establish correspondence with bracelets of length at most $n$ that are tiled with squares in 2 colors and dominoes with 4 colors. Having in mind $|{\cal A}_{n,2}| = 2^nf_n$, we have 
\begin{align*}
\sum_{k=0}^{n} 2^k L_k = 2 \cdot 2^n f_n= 2^{n+1}F_{n+1},
\end{align*}
which completes the proof.


\section{The main result}

The next Theorem \ref{secondThm} gives an extension of the relation (\ref{suryId}). It provides the answer whether there is an identity involving the product $3^nl_n$, respectively $3^nf_n$ (which appears in some other contexts \cite{ZuDo}). We prove it combinatorially while a proof by means of Lemma \ref{FiboLucasElemId} is also possible.

\begin{theorem}  \label{secondThm}
For the Fibonacci sequence $(F_n)_{n \ge 0}$ and the Lucas sequence $(L_n)_{n \ge 0}$ 
\begin{align} \label{secondThmId}
\sum_{k=0}^n 3^k (L_k + F_{k+1}) = 3^{n+1 } F_{n+1}.
\end{align}
\end{theorem}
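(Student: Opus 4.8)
The plan is to prove the identity combinatorially, in the spirit of the bijective proof of Lemma~\ref{suryThm}, and to cross-check it with the short algebraic argument alluded to above. For the algebraic route, the only fact one needs to isolate is
\begin{align*}
L_k + F_{k+1} = 3F_{k+1} - F_k ,
\end{align*}
which is immediate from Lemma~\ref{FiboLucasElemId} (that is, $L_k = F_{k-1}+F_{k+1}$) together with the recurrence $F_{k-1}+F_k = F_{k+1}$. Substituting makes the sum telescope,
\begin{align*}
\sum_{k=0}^n 3^k(L_k+F_{k+1}) \;=\; \sum_{k=0}^n\bigl(3^{k+1}F_{k+1}-3^kF_k\bigr) \;=\; 3^{n+1}F_{n+1}-F_0 \;=\; 3^{n+1}F_{n+1},
\end{align*}
which is $(\ref{secondThmId})$.

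For the combinatorial proof I read the right-hand side of $(\ref{secondThmId})$ as $3^{n+1}F_{n+1}=3\cdot 3^n f_n = 3\,|{\cal A}_{n,3}|$, three labelled copies of the set of $(n,3)$-board tilings, and the left-hand side as
\begin{align*}
\sum_{k=0}^n 3^k L_k + \sum_{k=0}^n 3^k F_{k+1} \;=\; \sum_{k=0}^n |{\cal B}_{k,3}| + \sum_{k=0}^n |{\cal A}_{k,3}| ,
\end{align*}
all $(k,3)$-bracelet tilings together with all $(k,3)$-board tilings for $0\le k\le n$. The core of the argument is the length-by-length identity
\begin{align*}
|{\cal B}_{k,3}| + 3\,|{\cal A}_{k-1,3}| \;=\; 2\,|{\cal A}_{k,3}| \qquad (k\ge 0,\ {\cal A}_{-1,3}:=\emptyset),
\end{align*}
which in raw form is just $l_k+f_{k-1}=2f_k$. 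I would establish it bijectively by splitting ${\cal B}_{k,3}$ into in-phase bracelets, which unfold onto one copy of ${\cal A}_{k,3}$, and out-of-phase bracelets (a fixed domino in one of $9$ colours on cells $k$ and $1$, then a length-$(k-2)$ linear tiling), which are matched to the $9\,|{\cal A}_{k-2,3}|$ tilings of ${\cal A}_{k,3}$ ending in a domino; the remaining $3\,|{\cal A}_{k-1,3}|$ tilings of the second copy of ${\cal A}_{k,3}$, namely those ending in a square, are matched with the three copies of ${\cal A}_{k-1,3}$. Summing over $k$ (and using ${\cal A}_{-1,3}=\emptyset$) gives $\sum_{k=0}^n|{\cal B}_{k,3}| = 2|{\cal A}_{n,3}|-\sum_{k=0}^{n-1}|{\cal A}_{k,3}|$; adding $\sum_{k=0}^n|{\cal A}_{k,3}|$ to both sides telescopes the board sums down to $3|{\cal A}_{n,3}|$. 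Equivalently, unwind the local identity directly, $3|{\cal A}_{n,3}| = |{\cal A}_{n,3}|+|{\cal B}_{n,3}|+3|{\cal A}_{n-1,3}| = \cdots = \sum_{k=0}^n\bigl(|{\cal A}_{k,3}|+|{\cal B}_{k,3}|\bigr)$, with base case $3|{\cal A}_{0,3}|=3=|{\cal A}_{0,3}|+|{\cal B}_{0,3}|$ (here the convention $L_0=2$, i.e. two empty bracelets, is used).

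The step I expect to be the main obstacle is upgrading this counting into a genuinely global bijection of the Lemma~\ref{suryThm} flavour: peel the maximal all-white-square tail off an $(n,3)$-board tiling equipped with a label in $\{1,2,3\}$, glue cells $k$ and $1$, and check that the result lands bijectively on $\bigsqcup_k {\cal B}_{k,3}\ \sqcup\ \bigsqcup_k {\cal A}_{k,3}$. The delicate point --- precisely the one that forces the complementary term $\sum 3^k F_{k+1}$ to appear at all --- is that a non-white square carries $m-1=2$ colours, whereas the proof of Sury's identity tacitly uses $m-1=1$; one must decide which of the three copies absorbs the peeled material, how the bracelets ending in a white square and the degenerate $0$-bracelets are produced, and verify invertibility in each regime. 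The arithmetic behind every case is elementary, using only $f_k=f_{k-1}+f_{k-2}$ and $l_k=f_k+f_{k-2}$.
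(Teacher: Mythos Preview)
Your proposal is correct. The telescoping algebraic argument via $L_k+F_{k+1}=3F_{k+1}-F_k$ is clean and complete; the paper only remarks that a proof ``by means of Lemma~\ref{FiboLucasElemId}'' is possible without spelling one out, so this is a genuine addition. Your combinatorial argument is also valid: the local identity $|{\cal B}_{k,3}|+3|{\cal A}_{k-1,3}|=2|{\cal A}_{k,3}|$ (equivalently $l_k+f_{k-1}=2f_k$) is established bijectively via the in-phase/out-of-phase split, and summing it telescopes to the result.

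The paper takes a different combinatorial route, namely the global bijection you flag as the ``main obstacle.'' It works directly with two copies of ${\cal A}_{n,3}$, peels the maximal all-white tail, and glues cell $k$ to cell $1$. From the first copy one obtains all $(k,3)$-bracelets ending in a gray or black square together with all in-phase bracelets ending in a domino; from the second copy, all bracelets ending in a white or black square together with all out-of-phase bracelets. The duplication of the black-square class is then reinterpreted, via unfolding, as $(k-1,3)$-board tilings, producing the complementary $\sum_{k}3^kF_{k+1}$ term. So the paper gives precisely the single global map you anticipated but did not carry out; conversely, your local-identity-then-telescope argument avoids the case analysis of how the three labelled copies are apportioned, at the cost of not being a single bijection. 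Either approach suffices, and your algebraic telescope is the shortest proof of the three.
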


\begin{proof}
Note that within $(n,3)$-bracelet tilings the $n$-th cell can be tiled by
\begin{itemize}
\item a square, in one of the colors $c_1, c_2, c_3$,
\item a domino in phase, and a domino out of phase.
\end{itemize}
According to the previouis notations, the sets $ {\cal B}_{n,3}^{c_1} $, $ {\cal B}_{n,3}^{c_2} $  and $ {\cal B}_{n,3}^{c_3} $ denote those tilings ending with a square in a color $c_1,c_2, c_3$, respectively, $ {\cal B}_{n,3}^{p} $ denote tilings ending with a domino in phase and $ {\cal B}_{n,3}^{o} $ denote tilings with a domino out of phase.

The set ${\cal A}_{n,3}$ of $(n,3)$-board tilings we divide into disjoint subsets $ {\cal A}_{n,3}^{s} $, ${\cal A}_{n,3}^{d}$ and $ \{t_{n,w} \}$,
\begin{align*}
{\cal A}_{n,3} = {\cal A}_{n,3}^{s} \cup {\cal A}_{n,3}^{d} \cup \{t_{n,w} \}.
\end{align*}

In the set ${\cal A}_{n,3}^{s}$, there are tilings consisting of a nonwhite square covering cell $k$ and white squares on cells $k+1$ through $n$, $1 \leq k \leq n$. The set ${\cal A}_{n,3}^{d}$ contains tilings having domino on cell $k$ while white squares covers cells $k+1$ through $n$. Now, removing tiles from $k+1$ through $n$ and gluing cells $k$ and 1 together, from ${\cal A}_{n,3}^{s}$ we obtain $(k,3)$-bracelet tilings ending with gray and those tilings ending with a black square. Thus, $$|{\cal A}_{n,3}^{s}| = |{\cal B}_{n,3}^{c_2}| + |{\cal B}_{n,3}^{c_3}|.$$
By the same operations, from the set ${\cal A}_{n,3}^{d}$ we get $(k,3)$-bracelet tilings ending with a domino in phase, $$|{\cal A}_{n,3}^{d}| = |{\cal B}_{n,3}^{p}|.$$ The board tiling $t_{n,w}$ consisting of all-white squares is mapped to the 0-bracelet.

In the same way we establish a correspondence between $(n,3)$-board tilings and out of phase $(k,3)$-bracelet tilings together with twice the number of $(k,3)$-bracelets ending with a white square, 
\begin{align*}
|{\cal A}_{n,3}^{s}| &= |{\cal B}_{n,3}^{c_1}| + |{\cal B}_{n,3}^{c_3}|,\\
|{\cal A}_{n,3}^{d}| &= |{\cal B}_{n,3}^{o}|. 
\end{align*}
The extra set ${\cal B}_{n,3}^{c_3}$ of $(k,3)$-bracelets ending with black square can be unfolded into $(k-1,3)$-board tilings (according to the arguments we use when proving Lemma \ref{FiboLucasElemId}). This means that relation
$$   \sum_{i=1}^{3} |{\cal B}_{n,3}^{c_i}|+|{\cal B}_{n,3}^{p}|+|{\cal B}_{n,3}^{o}| +1 + \sum_{k=0} ^{n-1}3^kF_{k+1}      =   2| {\cal A}_{n,3} |  $$
holds true. Furthermore, we have 
\begin{align*}
\sum_{k=0}^n 3^k L_k + \sum_{k=0} ^{n-1}3^kF_{k+1} &= 2\cdot 3^n F_{n+1}
\end{align*}
and finally 
\begin{align*}
 \sum_{k=0}^n 3^k L_k + \sum_{k=0} ^{n}3^kF_{k+1} &= 3^{n+1} F_{n+1},
\end{align*}
which completes the proof.
\end{proof}

For example, when $n=3$ then 
\begin{align*}
 | {\cal B}_{n,3}^{c_i} | &= 22\ (=18+3+1),\enspace i=1,2,3,\\
|{\cal B}_{n,3}^{p}|=|{\cal B}_{n,3}^{o}|&=36 (=27+9)
\end{align*}
and
\begin{align*}
|{\cal A}_{n,3}^{s}| &= 44 \ (=36+6+2),\\
|{\cal A}_{n,3}^{d}|&=36\  (=27+9).
\end{align*}
Figure \ref{Fig2} illustrates this instance showing $(3,3)$-board tilings in the set ${\cal A}_{3,3}^{s}$ for which $k=2$.

\begin{figure}[h!]
				
\setlength{\unitlength}{0.5cm}

\begin{picture}(0,1)(-2.5,0)
\put(0,0){\line(1,0){3}}
\put(0,0){\line(0,1){1}}
\put(1,0){\line(0,1){1}}
\put(2,0){\line(0,1){1}}
\put(3,0){\line(0,1){1}}
\put(0,1){\line(1,0){3}}
\multiput(1,0)(0.01,0){99} {\line(0,1){1}}

\put(4,0){\line(1,0){3}}
\put(4,0){\line(0,1){1}}
\put(5,0){\line(0,1){1}}
\put(6,0){\line(0,1){1}}
\put(7,0){\line(0,1){1}}
\put(4,1){\line(1,0){3}}
\multiput(4,0)(0.2,0){5}{\line(0,1){1}}
\multiput(4,0)(0,0.2){5}{\line(1,0){1}}

\multiput(5,0)(0.01,0){99} {\line(0,1){1}}

\put(8,0){\line(1,0){3}}
\put(8,0){\line(0,1){1}}
\put(9,0){\line(0,1){1}}
\put(10,0){\line(0,1){1}}
\put(11,0){\line(0,1){1}}
\put(8,1){\line(1,0){3}}
\multiput(9,0)(0.01,0){99} {\line(0,1){1}}

\multiput(8,0)(0.01,0){99} {\line(0,1){1}}

\put(12,0){\line(1,0){3}}
\put(12,0){\line(0,1){1}}
\put(13,0){\line(0,1){1}}
\put(14,0){\line(0,1){1}}
\put(15,0){\line(0,1){1}}
\put(12,1){\line(1,0){3}}
\multiput(13,0)(0.2,0){5}{\line(0,1){1}}
\multiput(13,0)(0,0.2){5}{\line(1,0){1}}

\put(16,0){\line(1,0){3}}
\put(16,0){\line(0,1){1}}
\put(17,0){\line(0,1){1}}
\put(18,0){\line(0,1){1}}
\put(19,0){\line(0,1){1}}
\put(16,1){\line(1,0){3}}
\multiput(17,0)(0.2,0){5}{\line(0,1){1}}
\multiput(17,0)(0,0.2){5}{\line(1,0){1}}

\multiput(16,0)(0.2,0){5}{\line(0,1){1}}
\multiput(16,0)(0,0.2){5}{\line(1,0){1}}

\put(20,0){\line(1,0){3}}
\put(20,0){\line(0,1){1}}
\put(21,0){\line(0,1){1}}
\put(22,0){\line(0,1){1}}
\put(23,0){\line(0,1){1}}
\put(20,1){\line(1,0){3}}
\multiput(21,0)(0.2,0){5}{\line(0,1){1}}
\multiput(21,0)(0,0.2){5}{\line(1,0){1}}

\multiput(20,0)(0.01,0){99} {\line(0,1){1}}

\end{picture}

\caption{The six tilings in the set ${\cal A}_{3,3}^{s}$, for the case $k=2$. \label{Fig2}}
\end{figure}
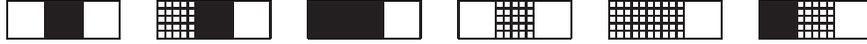

Clearly, there is a 1 to 1 correspondence between $(n,m)$-boards with a domino as the last nonwhite tile and either in phase tilings or out of phase tilings. There is also a 1 to 1 correspondence between boards with a square as the last nonwhite tile and $(k,m)$-bracelets, $1 \leq k \leq n$ ending with square in color $c_i$, $1 \leq i \leq m-1$. In order to establish a 1 to 2 correspondence we have to add $m-2$ sets of $(k-1,m)$-boards to the set of $(k,m)$-bracelets. This reasoning proves Theorem \ref{generalThm}. The identity for the next case $m=4$ is as follows,
\begin{align}
\sum_{k=0}^n 4^k (L_k + 2F_{k+1}) = 4^{n+1 } F_{n+1}.
\end{align}

\begin{theorem} \label{generalThm}
For the Fibonacci sequence $(F_n)_{n \ge 0}$ and the Lucas sequence $(L_n)_{n \ge 0}$ of numbers, with $m \ge 2$
\begin{align} \label{SuryGeneral}
\sum_{k=0}^n m^k [L_k + (m-2)F_{k+1}] = m^{n+1 } F_{n+1}.
\end{align}
\end{theorem}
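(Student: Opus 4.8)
The plan is to prove \eqref{SuryGeneral} by induction on $n$ using Lemma \ref{FiboLucasElemId} and the Fibonacci recurrence, exactly paralleling the telescoping computation already used for Sury's identity in the proof of Lemma \ref{suryThm}, but now carrying the extra weight $(m-2)F_{k+1}$ along. First I would substitute $L_k = F_{k-1}+F_{k+1}$ so that the summand becomes $m^k[F_{k-1} + F_{k+1} + (m-2)F_{k+1}] = m^k[F_{k-1} + (m-1)F_{k+1}]$, and then reorganize the sum into two pieces: $\sum_{k=0}^n m^k F_{k-1}$ and $(m-1)\sum_{k=0}^n m^k F_{k+1}$. The idea is that the shift by two indices between these two pieces is precisely what lets the terms collapse, since $F_{k+1} = F_k + F_{k-1}$ feeds the lower-indexed sum.

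Alternatively, and more cleanly, I would verify \eqref{SuryGeneral} by induction. For the base case $n=0$ the left side is $L_0 + (m-2)F_1 = 2 + (m-2) = m = m\cdot F_1$, which matches the right side. For the inductive step, assuming the identity holds for $n-1$, I would write
\begin{align*}
\sum_{k=0}^{n} m^k[L_k + (m-2)F_{k+1}] &= m^{n}F_{n} + m^n[L_n + (m-2)F_{n+1}]
\end{align*}
and then expand $L_n = F_{n-1}+F_{n+1}$ and use $F_{n-1}+F_{n+1} + (m-2)F_{n+1} + F_n = F_{n-1}+F_n + (m-1)F_{n+1} = F_{n+1} + (m-1)F_{n+1} = m F_{n+1}$, giving $m^n F_n + m^n\bigl(mF_{n+1} - F_n\bigr) = m^{n+1}F_{n+1}$, as desired. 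Thus the recursion is essentially self-contained once the Fibonacci recurrence is applied at the top term.

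The combinatorial route, which the excerpt gestures toward in the paragraph preceding the statement, is the other option: set up the $1$-to-$2$ correspondence between $(n,m)$-board tilings and $(k,m)$-bracelet tilings for $k \le n$, observing that a board with a square as its last nonwhite tile in one of colors $c_1,\dots,c_{m-1}$ folds to a bracelet ending in that square color, a board ending in a domino folds to an in-phase or out-of-phase bracelet, and the all-white board maps to the $0$-bracelet; the deficit of $m-2$ extra square-colors on the bracelet side is made up by adjoining $m-2$ copies of the set of $(k-1,m)$-boards, which by \eqref{Fnfn} contributes $(m-2)\sum m^{k-1}F_{k}$. Equating $2|\mathcal{A}_{n,m}| = 2m^n f_n = 2m^n F_{n+1}$ with the bracelet-plus-board count $\sum_{k=0}^n m^k L_k + (m-2)\sum_{k=0}^{n}m^k F_{k+1}$ (after reindexing the board deficit and absorbing one leftover term) yields \eqref{SuryGeneral}.

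The main obstacle in the combinatorial proof is bookkeeping the boundary terms correctly—specifically tracking the lone all-white tiling, the $0$-bracelet, and the off-by-one in the index range of the adjoined $(k-1,m)$-board sets—so that the deficit sum comes out as $\sum_{k=0}^n m^k F_{k+1}$ rather than $\sum_{k=0}^{n-1}$; the $n=3$, $m=3$ worked example in the excerpt is exactly the sanity check one should run. For the inductive proof the only subtlety is keeping the Fibonacci index shifts straight, which is routine. I would present the induction as the clean proof and remark that the combinatorial argument of Theorem \ref{secondThm} generalizes verbatim with $m-2$ replacing $1$.
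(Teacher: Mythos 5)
Your inductive proof is correct and complete: the base case $n=0$ gives $L_0+(m-2)F_1=m=mF_1$, and the step $m^nF_n+m^n[L_n+(m-2)F_{n+1}]=m^nF_n+m^n(mF_{n+1}-F_n)=m^{n+1}F_{n+1}$ follows from Lemma \ref{FiboLucasElemId} and the Fibonacci recurrence exactly as you compute. This is, however, a genuinely different route from the paper's. The paper proves Theorem \ref{generalThm} by the combinatorial $1$-to-$2$ correspondence between $(n,m)$-board tilings and $(k,m)$-bracelet tilings, $1\le k\le n$, padding the bracelet side with $m-2$ copies of the $(k-1,m)$-boards --- i.e.\ the argument you sketch in your third paragraph, which is the verbatim generalization of the proof of Theorem \ref{secondThm} --- and then independently re-derives the identity by generating functions in the final section, where closed forms for $\sum_{k\le n}m^kF_k$ and $\sum_{k\le n}m^kL_k$ are combined. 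Your induction is the most economical of the three: it needs only the base case, one application of $L_n=F_{n-1}+F_{n+1}$, and one application of the recurrence, with no boundary bookkeeping. What it gives up is what the paper's two methods supply: the bijective proof explains \emph{why} the correction term $(m-2)F_{k+1}$ appears (it counts the adjoined $(k-1,m)$-boards needed to balance the missing square colors), and the generating-function computation yields the individual partial sums $\sum m^kF_k$ and $\sum m^kL_k$ in closed form, not just their weighted combination. Your identification of the boundary terms (the all-white tiling, the $0$-bracelet, and the shift from $\sum_{k=0}^{n-1}$ to $\sum_{k=0}^{n}$ in the deficit sum) as the only delicate points of the combinatorial argument matches where the paper's proof of Theorem \ref{secondThm} does its final absorption step, so your sketch of that route is faithful as well.
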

In Theorem \ref{altSumGeneral} we give a further extension of Theorem \ref{suryThm}, to any $m$. We prove it applying Lemma \ref{FiboLucasElemId}.

\begin{theorem} \label{altSumGeneral}
The alternating sum of products $m^{n-k}$, $0 \leq k \leq n$ with Fibonacci and Lucas numbers is equal to either positive or negative value of $(n+1)$-th Fibonacci number, 
\begin{align} \label{alterIdentGeneral}
\sum_{k=0}^n (-1)^k m^{n-k} [L_{k+1} + (m-2)F_{k}] = (-1)^{n } F_{n+1}.
\end{align}
where $m \ge 2$.
\end{theorem}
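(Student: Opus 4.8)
The plan is to reduce the left-hand side to a telescoping sum by first eliminating the Lucas numbers via Lemma~\ref{FiboLucasElemId}. Writing that lemma with $n$ replaced by $k+1$ gives $L_{k+1}=F_k+F_{k+2}$, so the bracketed expression simplifies, using the Fibonacci recurrence $F_{k+2}=F_{k+1}+F_k$, to
\begin{align*}
L_{k+1}+(m-2)F_k = (m-1)F_k + F_{k+2} = mF_k + F_{k+1}.
\end{align*}
Hence each term of the sum splits as
\begin{align*}
(-1)^k m^{n-k}\bigl[L_{k+1}+(m-2)F_k\bigr] = (-1)^k m^{n-k+1}F_k + (-1)^k m^{n-k}F_{k+1}.
\end{align*}

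Next I would introduce $b_k := (-1)^k m^{n-k} F_{k+1}$, defined for every integer $k\ge -1$, and observe that $-b_{k-1} = (-1)^k m^{n-k+1} F_k$. Therefore the $k$-th summand above is exactly $b_k - b_{k-1}$, and the whole sum telescopes:
\begin{align*}
\sum_{k=0}^n (-1)^k m^{n-k}\bigl[L_{k+1}+(m-2)F_k\bigr] = \sum_{k=0}^n (b_k - b_{k-1}) = b_n - b_{-1}.
\end{align*}
Finally $b_n = (-1)^n m^{0} F_{n+1} = (-1)^n F_{n+1}$, while $b_{-1} = -m^{n+1} F_0 = 0$ because $F_0=0$, which yields the claimed identity.

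There is essentially no serious obstacle here; the only thing to watch is the index bookkeeping — making sure the shift in Lemma~\ref{FiboLucasElemId} is applied correctly, that the simplification to $mF_k+F_{k+1}$ is honest, and that the lower telescoping endpoint $b_{-1}$ genuinely vanishes thanks to $F_0=0$. One could alternatively avoid introducing $b_k$ and instead run the same cascading cancellation as in the displayed computation in the proof of Lemma~\ref{suryThm}, pairing the $F_{k+1}$-term at level $k$ with the $F_k$-term at level $k+1$; the telescoping formulation is merely a cleaner way to package that cancellation. (A generating-function check, matching $\sum_n(\text{LHS})x^n$ against $\sum_n(-1)^nF_{n+1}x^n$, would work too and fits the spirit of the later sections, but the direct argument via Lemma~\ref{FiboLucasElemId} is the shortest route.)
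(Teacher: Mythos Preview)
Your proof is correct and follows essentially the same route as the paper: both apply Lemma~\ref{FiboLucasElemId} to rewrite $L_{k+1}+(m-2)F_k$ as $mF_k+F_{k+1}$ and then exploit the resulting telescoping cancellation. The only cosmetic difference is that you package the cancellation cleanly via $b_k=(-1)^k m^{n-k}F_{k+1}$, whereas the paper writes the pairwise cancellations out explicitly line by line.
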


\begin{proof}
\begin{align*}
 m^nL_1&-m^{n-1}L_2+m^{n-2}L_3- \cdots +(-1)^nL_{n+1} \\
&= m^n(F_0+F_2)-m^{n-1}(F_1+F_3)+ m^{n-2}(F_2+F_4)- \cdots +(-1)^n(F_n+F_{n+2})\\
&= m^nF_1 - m^{n-1} (mF_1+F_2) + m^{n-2}(mF_2+F_3)- \cdots +(-1)^n(mF_n+F_{n+1})\\
&= m^nF_1-m^nF_1 -m^{n-1}F_2 + m^{n-1}F_2 + m^{n-2}F_3 - m^{n-2}F_3 - \cdots +(-1)^nF_{n+1}\\
&= (-1)^nF_{n+1}.
\end{align*}
\end{proof}

\begin{corollary} \label{FirstThm}
The alternating sum of products $2^{n-k}L_{k+1}$, $0 \leq k \leq n$ is equal to either positive or negative value of $(n+1)$-th Fibonacci number,
\begin{align} \label{alterIdent}
\sum_{k=0}^n (-1)^k 2^{n-k} L_{k+1} = (-1)^{n} F_{n+1}.
\end{align}
\end{corollary}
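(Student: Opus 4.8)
The plan is to derive Corollary~\ref{FirstThm} as the immediate specialization $m=2$ of Theorem~\ref{altSumGeneral}. In the general identity \eqref{alterIdentGeneral} the bracketed expression is $L_{k+1}+(m-2)F_k$; setting $m=2$ annihilates the coefficient $(m-2)$, so every $F_k$ term disappears and the left-hand side collapses to $\sum_{k=0}^n(-1)^k2^{n-k}L_{k+1}$, while the right-hand side is unchanged and equals $(-1)^nF_{n+1}$. Thus \eqref{alterIdent} follows verbatim from \eqref{alterIdentGeneral}. Since Theorem~\ref{altSumGeneral} is already established in the excerpt, no further work is needed; this is a one-line substitution.

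If one prefers a self-contained argument, I would instead replay the telescoping computation from the proof of Theorem~\ref{altSumGeneral} directly with $m=2$: expand each Lucas number via Lemma~\ref{FiboLucasElemId} as $L_{k+1}=F_k+F_{k+2}$, then use the recurrence \eqref{FiboRecurr} in the form $F_{k+2}=2F_k+F_{k+1}-F_k = mF_k + F_{k+1}-F_k$ (with $m=2$) to rewrite $2^{n-k}L_{k+1}$ so that consecutive terms in the alternating sum cancel in pairs, leaving only the final surviving term $(-1)^nF_{n+1}$. A third option is a generating-function check: multiply \eqref{alterIdent} by $x^n$, sum over $n$, and verify the resulting rational function against the known generating function of $(F_{n+1})$.

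There is essentially no obstacle here. The only point requiring a token of care in the direct telescoping version is bookkeeping the boundary terms of the alternating sum (the $k=0$ and $k=n$ ends) to confirm that exactly one term, namely $(-1)^nF_{n+1}$, remains uncancelled; but this mirrors the computation already displayed for Theorem~\ref{altSumGeneral}, so I would simply cite that proof with $m=2$ and remark that the $F_k$ contributions vanish identically.
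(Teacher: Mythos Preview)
Your proposal is correct and matches the paper's approach exactly: the paper states \eqref{alterIdent} as a corollary of Theorem~\ref{altSumGeneral} with no separate proof, so the intended argument is precisely the one-line substitution $m=2$ that you give. Your additional alternatives (direct telescoping, generating functions) are also valid but unnecessary here.
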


Thus, Theorem \ref{generalThm} and Theorem \ref{altSumGeneral} gives complementary families of the Fibonacci-Lucas identities, with the Sury identity as the best known representative of (\ref{SuryGeneral}).

\section{A generating functions approach}

Sums as the one in (\ref{secondThmId}) or (\ref{SuryGeneral}) can be systematically evaluated using \emph{generating functions}.
We recall that
\begin{equation*}
\sum_{n\ge0}F_nz^n=\frac{z}{1-z-z^2}\quad\text{and}\quad
\sum_{n\ge0}L_nz^n=\frac{2-z}{1-z-z^2}.
\end{equation*}
Consequently we have
\begin{align*}
\sum_{n\ge0}\bigg(\sum_{0\le k\le n}m^kF_k\bigg)z^n&=\frac{1}{1-z}\frac{mz}{1-mz-m^2z^2}\\
&=\frac{m}{m^2+m-1}\frac{1+m^2z}{1-mz-m^2z^2}-\frac{m}{m^2+m-1}\frac1{1-z}.
\end{align*}
Comparing coefficients, this leads to the explicit formula
\begin{equation*}
\sum_{0\le k\le n}m^kF_k=\frac{m^{n+1}}{m^2+m-1}[F_{n+1}+mF_n]-\frac{m}{m^2+m-1}.
\end{equation*}
Likewise,
\begin{align*}
\sum_{n\ge0}\bigg(\sum_{0\le k\le n}m^kL_k\bigg)z^n&=\frac{1}{1-z}\frac{2-mz}{1-mz-m^2z^2}\\
&=\frac{m}{m^2+m-1}\frac{(2m+1)-m(m-2)z}{1-mz-m^2z^2}+\frac{m-2}{m^2+m-1}\frac1{1-z}
\end{align*}
and
\begin{equation*}
\sum_{0\le k\le n}m^kL_k=\frac{m^{n+1}}{m^2+m-1}[(2m+1)F_{n+1}-(m-2)F_n]+\frac{m-2}{m^2+m-1}.
\end{equation*}

This evaluates the formula  (\ref{SuryGeneral}):
\begin{align*}
\sum_{0\le k\le n}&m^k[L_k+(m-2)F_{k+1}]
=\sum_{0\le k\le n}m^kL_k+\frac{m-2}{m}\sum_{1\le k\le n+1}m^kF_{k}\\
&=\frac{m^{n+1}}{m^2+m-1}[(2m+1)F_{n+1}-(m-2)F_n]+\frac{m-2}{m^2+m-1}\\
&+\frac{m-2}{m}\frac{m^{n+1}}{m^2+m-1}[F_{n+1}+mF_n]-\frac{m-2}{m}\frac{m}{m^2+m-1}+\frac{m-2}{m}m^{n+1}F_{n+1}\\
&=m^{n+1}F_{n+1}.
\end{align*}

Alternating sums, as in (\ref{alterIdentGeneral}), can be handled as well: We consider the generating function
\begin{align*}
\sum_{n\ge0}&\bigg(\sum_{0\le k\le n}(-1)^km^{n-k}F_k\bigg)z^n=
\sum_{k,l\ge0}(-1)^kF_kz^km^{l}z^l\\
&=\frac1{1-mz}\frac{-z}{1+z-z^2}=\frac1{m^2+m-1}\frac{m+z}{1+z-z^2}-\frac m{m^2+m-1}\frac1{1-mz},
\end{align*}
which leads to
\begin{equation*}
\sum_{0\le k\le n}(-1)^km^{n-k}F_k=\frac{(-1)^{n+1}}{m^2+m-1}[mF_{n+1}+F_n]-\frac m{m^2+m-1}m^n.
\end{equation*}
Similarly,
\begin{align*}
\sum_{n\ge0}&\bigg(\sum_{0\le k\le n}(-1)^km^{n-k}L_k\bigg)z^n=
\sum_{k,l\ge0}(-1)^kL_kz^km^{l}z^l\\
&=\frac1{1-mz}\frac{2+z}{1+z-z^2}\\&=\frac1{m^2+m-1}\frac{(m-2)-z(2m+1)}{1+z-z^2}-\frac {m(2m+1)}{m^2+m-1}\frac1{1-mz},
\end{align*}
which leads to
\begin{equation*}
\sum_{0\le k\le n}(-1)^km^{n-k}L_k=\frac{(-1)^{n+1}}{m^2+m-1}[(m-2)F_{n+1}-(2m+1)F_n]+\frac {m(2m+1)}{m^2+m-1}m^n.
\end{equation*}
Formul\ae{} (\ref{alterIdentGeneral}) and (\ref{alterIdent}) follow from these two in a straightforward way.


\begin{thebibliography}{1}

\bibitem{ACMS} T. Amdeberhan, X. Chen, V. H. Moll, B. E. Sagan, Generalized Fibonacci polynomials and Fibonomial coefficients, Ann. Comb. 18 (2014) 541--562.

\bibitem{benjamin} A. T. Benjamin and J. J. Quinn, \textit{Proofs that Really Count}, Mathematical Association of America, Washington DC, 2003.

\bibitem{BePlSe} A. T. Benjamin, S. S. Plott, J. A. Sellers, Tiling of Recent Sum Identities Involving Pell Numbers, \textit{Ann. Comb.} \textbf{12} (2008) 271--278.

\bibitem{ZuDo} T. Do\v sli\'c, I. Zubac, Counting maximal matchings in linear polymers, \textit{Ars Math. Contemp.} \textbf{11} (2016) 255-276.

\bibitem{GKP} R. L. Graham, D. E. Knuth, O. Patashnik, Concrete Mathematics, second edition,  Addison-Wesley, 1994.

\bibitem{kwong} H. Kwong, An Alternate Proof of Sury's Fibonacci-Lucas Relation, \textit{Amer. Math. Monthly} \textbf{6} (2014) 514--514.

\bibitem{mart} I. Martinjak, Two Extensions of the Sury's Identity, \textit{Amer. Math. Monthly}, to appear, 1pp

\bibitem{sury} B. Sury, A Polynomial Parent to a Fibonacci-Lucas Relation, \textit{Amer. Math. Monthly} \textbf{3} (2014) 236--236.

\bibitem{vajda} S. Vajda, \textit{Fibonacci and Lucas Numbers, and the Golden Section}, John Wiley \& Sons, Inc., New York, 1989.


\end{thebibliography}
\end{document}